\newtheorem{theorem}{Theorem}[section]
\newtheorem{rem}[theorem]{Remark}
\newenvironment{remark}{\begin{rem}\rm}{\end{rem}}
\newtheorem{proposition}[theorem]{Proposition}
\newtheorem{lemma}[theorem]{Lemma}
\newtheorem{eg}[theorem]{Example}
\newtheorem{definition}[theorem]{Definition}
\newcommand{\C}{\mathbb{C}}
\newcommand{\R}{\mathbb{R}}
\renewcommand{\S}{\mathcal{S}}
\newcommand{\K}{\mathcal{K}}
\renewcommand{\phi}{\varphi}
\newcommand{\A}{\mathcal{A}}
\newcommand{\tpi}{\tilde{\pi}}
\DeclareMathOperator{\Span}{span}
\DeclareMathOperator{\End}{End}
\DeclareMathOperator{\Img}{im}
\DeclareMathOperator{\Id}{Id}
\DeclareMathOperator{\rank}{rank}
\title{On the geometry of almost $\S$-manifolds}
\author{Sean Fitzpatrick\footnote{Research supported by an NSERC postdoctoral fellowship}\\
749 Evans Hall \#3840\\
Department of Mathematics\\
University of California, Berkeley\\
Berkeley, California, USA 94720\\sean@math.berkeley.edu}
\begin{document}

\maketitle

\begin{abstract}
An $f$-structure on a manifold $M$ is an endomorphism field $\varphi$ satisfying $\varphi^3+\varphi=0$.  We call an $f$-structure {\em regular} if the distribution $T=\ker\phi$ is involutive and regular, in the sense of Palais.  We show that when a regular $f$-structure on a compact manifold $M$ is an almost $\S$-structure, it determines a torus fibration of $M$ over a symplectic manifold.  When $\rank T = 1$, this result reduces to the Boothby-Wang theorem.  Unlike similar results for manifolds with $\S$-structure or $\K$-structure, we do not assume that the $f$-structure is normal.  We also show that given an almost $\S$-structure, we obtain an associated Jacobi structure, as well as a notion of symplectization.
\end{abstract}

\section{Introduction}
Let $(M,\eta)$ be a cooriented contact manifold.  The Boothby-Wang theorem \cite{BW} tells us that if the Reeb field $\xi$ corresponding to the contact form $\eta$ is regular (in the sense of Palais \cite{Palais}), then $M$ is a prequantum circle bundle $\pi:M\to N$ over a symplectic manifold $(N,\omega)$, where $\pi^*\omega = -d\eta$, and $\eta$ may be identified with the connection 1-form.  Conversely, let $M$ be a prequantum circle bundle over a symplectic manifold $(N,\omega)$ and let $\eta$ be a connection 1-form.  Given a choice of compatible almost complex structure $J$ for $\omega$, let $G(X,Y)=\omega(JX,Y)$ be the associated Riemannian metric on $N$, and let $\tpi$ denote the horizontal lift of vector fields defined by $\eta$.  We can then define an endomorphism field $\phi\in\Gamma(M,\End(TM))$ by
\[
 \phi X = \tpi J\pi_* X,
\]
and a Riemannian metric $g$ by $g=\pi^*G+\eta\otimes\eta$.  If we let $\xi$ be the vertical vector field satisfying $\eta(\xi)=1$, then $(\phi,\xi,\eta,g)$ defines a contact metric structure on $M$ \cite{Blair3}.  In particular, we note that $\phi$ is an $f$-structure on $M$.  By construction, we have $\phi^2= -\Id_{TM} + \eta\otimes\xi$, from which it follows that $\phi^3+\phi=0$.

In \cite{Blair2,BLY}, Blair et al consider compact Riemannian manifolds equipped with a regular normal $f$-structure $\phi$, and show that such manifolds are the total space of a principal torus bundle over a complex manifold $N$, and that in addition, $N$ is a K\"ahler manifold if the fundamental 2-form of the $f$-structure is closed (that is, if $M$ is a $\K$-manifold).  Saenz argued in \cite{Saenz} that if this $\K$-structure is an $\S$-structure, then the symplectic form of the K\"ahler manifold $N$ is integral.

While the results in \cite{BLY,Saenz} provide us with a generalization of the Boothby-Wang theorem, the proofs in \cite{BLY} (and by extension, the argument in \cite{Saenz}) rely in several places on the assumption that the $f$-structure $\phi$ is normal.   Since this assumption is not required in the original Boothby-Wang theorem, it is natural to ask what can be said if this assumption is dropped for $f$-structures of higher corank. In this note, we use a theorem of Tanno \cite{Tan1} to show that if $M$ is a compact almost $\S$-manifold, in the sense of \cite{DIP}, then $M$ is a principal torus bundle over a symplectic manifold whose symplectic form is integral. (More precisely, the symplectic form will be a real multiple of an integral symplectic form.)  Not surprisingly, this tells us that requiring $\phi$ to be normal is the same as demanding that the base of our torus bundle be K\"ahler.

This ``generalized Boothby-Wang theorem'' is one of a number of similarities between manifolds with almost $\S$-structure and contact manifolds.  In the final section of this paper we demonstrate two more.  First, there is a natural notion of symplectization: given an almost $\S$-manifold $M$, there is an open, conic, symplectic submanifold of $T^*M$ whose base is $M$.  Second, a choice of one-form (expressed in terms of the almost $\S$-structure) allows us to define a Jacobi bracket on the algebra of smooth functions on $M$, giving us in particular a notion of Hamiltonian vector field on manifolds with almost $S$-structure.

\section{Preliminaries}
\subsection{Regular involutive distributions}
Let $F\subset TM$ be an involutive distribution of rank $k$.  We briefly recall the notion of a regular distribution in the sense of Palais, and refer the reader to \cite{Palais} for the details.  Roughly speaking, the involutive distribution $F$ is {\em regular} if each point $p\in M$ has a coordinate neighbourhood $(U, x^1,\ldots.x^n)$ such that
\[
\left\{\left(\dfrac{\partial}{\partial x^1}\right)_p,\ldots, \left(\dfrac{\partial}{\partial x^k}\right)_p\right\} 
\]
 forms a basis for $F_p\subset T_pM$, and such that the integral submanifold of $F$ through $p$ intersects $U$ in only one $k$-dimensional slice.  When $F$ is regular, the leaf space $\mathcal{F} = M/F$ is a smooth Hausdorff manifold, and the quotient mapping $\pi_F:M\to\mathcal{F}$ is smooth and closed.  When $M$ is compact and connected, the leaves of $F$ are compact and isomorphic, and are the fibres of the smooth fibration $\pi_F:M\to\mathcal{F}$.

In particular, a vector field $X$ on $M$ is regular if each $p\in M$ has a neighbourhood $U$ through which the integral curve of $X$ through $p$ passes only once.  If $M$ is compact, the integral curves of a regular vector field are thus diffeomorphic to circles.  Applying this fact to the Reeb vector field of a contact manifold gives part of the proof of the Boothby-Wang theorem.
\subsection{$f$-structures}\label{fstruct}
An {\em $f$-structure} on $M$ is an endomorphism field $\varphi\in\Gamma(M,\End TM)$ such that
\begin{equation}\label{fdef}
 \varphi^3+\varphi = 0.
\end{equation}
Such structures were introduced by K. Yano in \cite{Yano}; many of the facts regarding $f$ structures are collected in the book \cite{KY2}.    By a result of Stong \cite{Stong}, every $f$-structure is of constant rank.  If $\rank \varphi = \dim M$, then $\varphi$ is an almost complex structure on $M$, while if $\rank \varphi = \dim M-1$, then $\varphi$ determines an almost contact structure on $M$.

It is easy to check that the operators $l=-\varphi^2$ and $m=\varphi^2+\Id_{TM}$ are complementary projection operators; letting $E=l(TM) = \Img \varphi$ and $T=m(TM)=\ker \varphi$, we obtain the splitting
\begin{equation}\label{Tsplit}
 TM = E\oplus T = \Img \varphi\oplus \ker \varphi
\end{equation}
of the tangent bundle.  Since $(\varphi|_E)^2 = -\Id_E$, $\varphi$ is necessarily of even rank. When the corank of $\phi$ is equal to one, the distribution $T$ is automatically trivial and involutive.  However, if $\rank T>1$, this need not be the case, and one often makes additional simplifying assumptions about $T$.  An $f$-structure such that $T$ is trivial is called an {\em $f$-structure with parallelizable kernel} (or $f\cdot$pk-structure for short) in \cite{DIP}.  We will assume that an $f\cdot$pk-structure includes a choice of a trivializing frame $\{\xi_i\}$ and corresponding coframe $\{\eta^i\}$ for $T^*$, with
\[
 \eta^i(\xi_j) = \delta^i_j,\quad \varphi(\xi_i) = \eta^j\circ\varphi = 0,\quad\text{and}\quad \varphi^2 = -\Id + \sum\eta^i\otimes \xi_i.
\]
(This is known as an {\em $f$-structure with complemented frames} in \cite{Blair2}; such a choice of frame and coframe always exists.)  Given an $f\cdot$pk-structure, it is always possible \cite{KY2} to find a Riemannian metric $g$ that is compatible with $(\varphi,\xi_i,\eta^j)$ in the sense that, for all $X,Y\in \Gamma(M,TM)$, we have
\begin{equation}\label{phig}
 g(X,Y) = g(\varphi X,\varphi Y)+\sum_{i=1}^k \eta^i(X)\eta^i(Y).
\end{equation}
Following \cite{DIP}, we will call the 4-tuple $(\varphi,\xi_i,\eta^j, g)$ a {\em metric $f\cdot$pk structure}.  
Given a metric $f\cdot$pk-structure $(\varphi, \xi_i,\eta^j,g)$, we can define the {\em fundamental 2-form} $\Phi_g\in\A^2(M)$ by
\begin{equation}\label{2form}
 \Phi_g(X,Y) = g(\varphi X, Y).
\end{equation}
\begin{remark}
  Our definition of $\Phi_g$ is chosen to agree with our preferred sign conventions in symplectic geometry; however, many authors place $\varphi$ in the second slot, so our convention here uses the opposite sign of that found for example in \cite{BLY} and \cite{DIP}.
\end{remark}

We will call an $f$-structure $\phi$ {\em regular} if the distribution $T=\ker\phi$ is regular in the sense of Palais \cite{Palais}.  An $f\cdot$pk-structure is regular if the vector fields $\xi_i$ are regular and independent. An $f\cdot$pk-structure is called {\em normal} \cite{Blair2} if the tensor $N$ defined by
\begin{equation}\label{normalf}
 N= [\varphi,\varphi] + \sum_{i=1}^k d\eta^i\otimes \xi_i,
\end{equation}
vanishes identically. Here $[\varphi,\varphi]$ denotes the Nijenhuis torsion of $\varphi$, which is given by
\[
 [\varphi,\varphi](X,Y) = \varphi^2[X,Y]+[\varphi X,\varphi Y]-\varphi[\varphi X,Y]-\varphi[X,\varphi Y].
\]
When $\phi$ is normal, the $+i$-eigenbundle of $\phi$ (extended by $\C$ linearity to $T_\C M$) defines a CR structure $E_{1,0}\subset T_\C M$.
Regular normal $f$-structures are studied in \cite{BLY}, where it is proved that a compact manifold with regular normal $f$-structure is a principal torus bundle over a complex manifold $N$.  If the fundamental 2-form $\Phi_g$ of a normal $f$-structure is closed, then the $f$-structure is called a $\K$-structure, and $M$ a $\K$-manifold.  For a compact regular $\K$-manifold $M$, the base $N$ of the torus fibration is a K\"ahler manifold.  A special case of a $\K$-manifold is an $\S$-manifold.  On an $\S$ manifold there exist constants $\alpha^1,\ldots, \alpha^k$ such that $d\eta^i = -\alpha^i\Phi_g$ for $i=1,\ldots, k$.  Two commonly considered cases are the case $\alpha^i=0$ for all $i$, and the case $\alpha^i=1$ for all $i$.  In the language of CR geometry, the former case is analogous to a ``Levi-flat'' CR manifold, while the latter defines an analogue of a strongly pseudoconvex CR manifold (typically, strongly pseudoconvex CR manifolds are assumed to be of ``hypersurface type,'' meaning that the complementary distribution $T$ has rank one; see \cite{DT}).

A refinement of the notion of $\S$-structure was introduced in \cite{DIP}: a metric $f\cdot$pk-structure $(\phi,\xi_i,\eta^j,g)$ which is not necessarily normal is called an {\em almost} $\S$-structure if $d\eta^i=-\Phi_g$ for each $i=1,\ldots, k$.  An $f$-structure $\phi$ is called CR-integrable in \cite{DIP} if the $+i$-eigenbundle $E_{1,0}\subset T_\C M$ of $\phi$ is involutive (and hence, defines a CR structure).  It is shown in \cite{DIP} that an $f\cdot$pk-structure is CR-integrable if and only if the tensor $N$ given by \eqref{normalf} satisfies $N(X,Y)=0$ for all $X,Y\in\Gamma(M,E)$, where $E=\Img\phi$, whereas for a normal $f\cdot$pk-structure, $N$ must vanish for all $X,Y\in\Gamma(M,TM)$.  In \cite{LP} it is proved that a CR-integrable almost $\S$-manifold admits a canonical connection analogous to the Tanaka-Webster connection of a strongly pseudoconvex CR manifold.  For the relationship between this connection and the $\overline{\partial}_b$ operator of the corresponding tangential Cauchy-Riemann complex, as well as an application of this relationship to defining an analogue of geometric quantization for almost $\mathcal{S}$-manifolds, see \cite{F5}.

In this paper, we will define an almost $\K$-structure to be a metric $f\cdot$pk-structure for which $d\Phi_g=0$, and we will define an almost $\S$-structure more generally to be an almost $\K$-structure such that $d\eta^i = -\alpha^i\Phi_g$ for constants $\alpha^i\in\R$, for $i=1,\ldots, k$.

\section{Properties of almost $\K$ and almost $\S$-structures}
Let $(\phi,\xi_i,\eta^i)$ be an $f\cdot$pk-structure on a compact, connected manifold $M$.  Let $g$ be a Riemannian metric satisfying the compatibility condition \eqref{phig}, and let $\Phi_g$ denote the corresponding fundamental 2-form.  Let $E=\Img\phi$, and $T=\ker\phi$ denote the distribution spanned by the $\xi_i$.  It's easy to check that the distributions $E$ and $T$ are orthogonal with respect to $g$, and that the restriction of $\Phi_g$ to $E\otimes E$ is nondegenerate, from which we have the following: 
\begin{lemma}\label{lemmy}
  $X\in\Gamma(M,T)$ if and only if $\iota(X)\Phi_g = 0$.
\end{lemma}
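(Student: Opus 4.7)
My plan is to split the proof into the two implications, using the direct sum decomposition $TM = E \oplus T$ from \eqref{Tsplit} and the two properties of $\Phi_g$ noted immediately before the lemma statement: that $E \perp T$ with respect to $g$, and that $\Phi_g$ restricts to a nondegenerate form on $E \otimes E$.

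For the forward direction, suppose $X \in \Gamma(M,T)$, so $\phi X = 0$ pointwise. Then for any $Y$, the definition \eqref{2form} gives $\Phi_g(X,Y) = g(\phi X, Y) = 0$, so $\iota(X)\Phi_g = 0$. This is the easy direction and needs nothing beyond the definition.

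For the converse, suppose $\iota(X)\Phi_g = 0$. Decompose $X = X_E + X_T$ with $X_E \in \Gamma(M,E)$ and $X_T \in \Gamma(M,T)$. By the forward direction already established, $\iota(X_T)\Phi_g = 0$, so $\iota(X_E)\Phi_g = 0$ as well. In particular, $\Phi_g(X_E, Y) = 0$ for all $Y \in \Gamma(M,E)$. Since $\Phi_g$ is nondegenerate on $E \otimes E$, this forces $X_E = 0$, whence $X = X_T \in \Gamma(M,T)$.

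There is really no obstacle here; the content of the lemma is entirely packaged in the two properties of $\Phi_g$ quoted in the sentence preceding the statement, so the proof is a short unwinding. The only point worth being explicit about is the use of the decomposition $X = X_E + X_T$ together with $\phi|_E$ being an automorphism of $E$ (in fact a complex structure, since $(\phi|_E)^2 = -\Id_E$), which is what makes the restriction of $\Phi_g$ to $E \otimes E$ nondegenerate. If one preferred an even more direct argument avoiding the decomposition, one could instead test $\iota(X)\Phi_g = 0$ against $Y = \phi X$ to get $g(\phi X, \phi X) = 0$ and hence $\phi X = 0$, which means $X \in \ker \phi = T$.
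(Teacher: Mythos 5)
Your proof is correct and follows exactly the route the paper intends: the paper gives no explicit proof, simply asserting that the lemma follows from the orthogonality of $E$ and $T$ and the nondegeneracy of $\Phi_g$ on $E\otimes E$, which is precisely what you unwind (and your alternative one-line argument testing against $Y=\phi X$ is also valid). Nothing to add.
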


\begin{proposition}\label{kstr}
 Let $(\phi,\xi_i,\eta^i,g)$ be a metric $f\cdot$pk-structure.  Then $T=\ker\phi$ is involutive whenever $d\Phi_g=0$.
\end{proposition}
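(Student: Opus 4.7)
The plan is to characterize sections of $T$ purely in terms of the 2-form $\Phi_g$ via Lemma \ref{lemmy}, and then use Cartan calculus to show that the bracket of two such sections is again annihilated by $\Phi_g$.

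First, for any $X \in \Gamma(M,T)$, Lemma \ref{lemmy} gives $\iota_X \Phi_g = 0$. Combined with the hypothesis $d\Phi_g = 0$, Cartan's magic formula yields
\[
L_X \Phi_g = d(\iota_X \Phi_g) + \iota_X d\Phi_g = 0,
\]
so sections of $T$ preserve the fundamental form.

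Next, given $X, Y \in \Gamma(M,T)$, I would invoke the standard commutation identity $\iota_{[X,Y]} = L_X \iota_Y - \iota_Y L_X$ acting on differential forms. Applied to $\Phi_g$ this gives
\[
\iota_{[X,Y]} \Phi_g = L_X(\iota_Y \Phi_g) - \iota_Y(L_X \Phi_g) = 0 - 0 = 0,
\]
where the first term vanishes by Lemma \ref{lemmy} applied to $Y$, and the second by the previous step applied to $X$. Then Lemma \ref{lemmy}, used in the reverse direction, forces $[X,Y] \in \Gamma(M,T)$, yielding involutivity.

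There is no substantive obstacle here: the work has already been done in Lemma \ref{lemmy}, which converts membership in $T$ into a condition on $\Phi_g$, and the argument is essentially the standard proof that the kernel distribution of a closed form of constant rank is involutive. The only thing to be careful about is that Lemma \ref{lemmy} is a pointwise characterization, so it applies equally well to $[X,Y]$ as to the generating frame $\xi_i$; in particular one need not check involutivity frame by frame.
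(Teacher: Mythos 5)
Your proof is correct and takes essentially the same route as the paper: both reduce involutivity to showing $\iota([X,Y])\Phi_g=0$ via Lemma \ref{lemmy} and then extract this from $d\Phi_g=0$. The only difference is presentational — the paper expands $d\Phi_g(X,Y,Z)$ with the invariant formula for the exterior derivative, while you package the same computation through Cartan's magic formula and the identity $\iota([X,Y])=[\mathcal{L}(X),\iota(Y)]$.
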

\begin{proof}
 Let $X,Y\in\Gamma(M,T)$, and let $Z\in\Gamma(M,TM)$.  Then, using Lemma \ref{lemmy} above, we have
\begin{align*}
 d\Phi_g(X,Y,Z) & = X\cdot\Phi_g(Y,Z)+Y\cdot\Phi_g(Z,X)+Z\cdot\Phi_g(X,Y)\\
&\quad\quad\quad -\Phi_g([X,Y],Z)-\Phi_g([Y,Z],X) - \Phi_g([Z,X],Y)\\
&= -\Phi_g([X,Y],Z).
\end{align*}
Therefore, if $d\Phi_g = 0$, then $\iota([X,Y])\Phi_g=0$, and thus  $[X,Y]\in \Gamma(M,T)$, which proves the proposition.
\end{proof}
Let us now suppose that $(\phi,\xi_i,\eta^i,g)$ is an almost $\S$-structure, so that the 1-forms $\eta^i$ satisfy $d\eta^i = -\alpha^i\Phi_g$ for constants $\alpha^i$, some of which may be zero.  The following results were proved in \cite{DIP} in the case  that $\alpha^i=1$ for all $i$; we easily see that the results remain true in our more general setting:
\begin{proposition}\label{aa}
 If $(\phi,\xi_i,\eta^j,g)$ is an almost $\S$-structure, then $\mathcal{L}(\xi_i)\xi_j=[\xi_i,\xi_j]=0$ for all $i,j=1,\ldots,k$.
\end{proposition}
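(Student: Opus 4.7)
The plan is to combine Proposition \ref{kstr} (involutivity of $T$) with the explicit formulas for $d\eta^i$ and $\Phi_g$ to pin down $[\xi_i,\xi_j]$ inside the frame $\{\xi_k\}$.

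First, since an almost $\S$-structure is in particular an almost $\K$-structure, Proposition \ref{kstr} gives $[\xi_i,\xi_j] \in \Gamma(M,T)$. Because $\{\xi_1,\dots,\xi_k\}$ trivializes $T$ with dual coframe $\{\eta^1,\dots,\eta^k\}$, it suffices to show $\eta^\ell([\xi_i,\xi_j]) = 0$ for every $\ell$.

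Next I would apply the Cartan formula for the exterior derivative:
\[
d\eta^\ell(\xi_i,\xi_j) = \xi_i\!\cdot\!\eta^\ell(\xi_j) - \xi_j\!\cdot\!\eta^\ell(\xi_i) - \eta^\ell([\xi_i,\xi_j]).
\]
Since $\eta^\ell(\xi_m) = \delta^\ell_m$ is constant, the first two terms vanish, leaving $\eta^\ell([\xi_i,\xi_j]) = -d\eta^\ell(\xi_i,\xi_j)$. Using the almost $\S$-condition $d\eta^\ell = -\alpha^\ell \Phi_g$ this becomes $\alpha^\ell \Phi_g(\xi_i,\xi_j)$. But $\Phi_g(\xi_i,\xi_j) = g(\phi\xi_i,\xi_j) = 0$ because $\xi_i \in \ker\phi$ (alternatively, this follows directly from Lemma \ref{lemmy}). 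Hence $\eta^\ell([\xi_i,\xi_j]) = 0$ for all $\ell$, which combined with involutivity forces $[\xi_i,\xi_j] = 0$. The statement about Lie derivatives is then immediate from $\mathcal{L}(\xi_i)\xi_j = [\xi_i,\xi_j]$.

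There is no real obstacle here: the argument is a short computation. The only subtlety is noting that Proposition \ref{kstr} applies because $d\eta^i = -\alpha^i \Phi_g$ together with $d\Phi_g = 0$ (the almost $\K$-condition built into the definition of almost $\S$-structure in this paper) ensures $T$ is involutive, so the bracket lives in $T$ and we are allowed to test it against the $\eta^\ell$.
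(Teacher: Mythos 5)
Your proposal is correct and follows essentially the same route as the paper: involutivity of $T$ via Proposition \ref{kstr}, then the Cartan formula for $d\eta^\ell$ together with $d\eta^\ell=-\alpha^\ell\Phi_g$ and $\iota(\xi_i)\Phi_g=0$ to kill each component $\eta^\ell([\xi_i,\xi_j])$. No differences worth noting.
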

\begin{proof}
 Since the fundamental 2-form $\Phi_g$ of an almost $\S$-structure is closed, the distribution $T$ is involutive.  Thus we may write $[\xi_i,\xi_j] = \sum c_{ij}^a\xi_a$. But for any $a,i,j\in \{1,\ldots,k\}$, we have
\[
 c^a_{ij} = \eta^a([\xi_i,\xi_j]) = \xi_i\cdot\eta^a(\xi_j)-\xi_j\cdot\eta^a(\xi_i)-d\eta^a(\xi_i,\xi_j) = \alpha^a\Phi_g(\xi_i,\xi_j)=0. \qedhere
\]
\end{proof}
\begin{proposition}\label{ab}
If $(\phi,\xi_i,\eta^j,g)$ is an almost $\S$-structure, then $\mathcal{L}(\xi_i)\eta^j=0$ for all $i,j=i,\ldots,k$.
\end{proposition}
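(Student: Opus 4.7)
The plan is to apply Cartan's magic formula $\mathcal{L}(\xi_i)\eta^j = \iota(\xi_i)d\eta^j + d(\iota(\xi_i)\eta^j)$ and dispatch each term using the almost $\S$-structure data together with Lemma \ref{lemmy}.

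First I would note that $\iota(\xi_i)\eta^j = \eta^j(\xi_i) = \delta^j_i$ is a constant, so its exterior derivative vanishes, killing the second term outright. For the first term, the defining property of an almost $\S$-structure gives $d\eta^j = -\alpha^j \Phi_g$, hence
\[
\iota(\xi_i) d\eta^j = -\alpha^j \, \iota(\xi_i)\Phi_g.
\]
Now since $\xi_i \in \Gamma(M,T) = \ker\phi$, Lemma \ref{lemmy} tells us that $\iota(\xi_i)\Phi_g = 0$, and combining these observations yields $\mathcal{L}(\xi_i)\eta^j = 0$ as required.

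There is no real obstacle here: once Lemma \ref{lemmy} is in hand, the proof is a one-line application of Cartan's formula, and the constants $\alpha^j$ (some possibly zero) play no role beyond appearing as a harmless scalar factor in front of the vanishing contraction.
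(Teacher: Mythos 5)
Your proof is correct and is essentially identical to the paper's: both apply Cartan's formula, note that $\eta^j(\xi_i)=\delta^j_i$ is constant, and kill the remaining term $-\alpha^j\,\iota(\xi_i)\Phi_g$ via Lemma \ref{lemmy}. Nothing to add.
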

\begin{proof}
 We have $\mathcal{L}(\xi)\eta^j = d(\eta^j(\xi_i)) + \iota(\xi_i)d\eta^j = -\alpha^j(\iota(\xi_i)\Phi_g)=0$.
\end{proof}
We remark that several other results from \cite{DIP} hold in this more general setting, but they are not needed here.  To conclude this section, we state a theorem due to Tanno \cite{Tan1}:
\begin{theorem}\label{Tanno}
 For a regular and proper vector field $X$ on a manifold $M$, the following are equivalent:
\begin{enumerate}[(i)]
 \item The period function $\lambda_X$ of $X$ is constant.
 \item There exists a 1-form $\eta$ such that $\eta(X)=1$ and $\mathcal{L}(X)\eta = 0$.
 \item There exists a Riemannian metric $g$ such that $g(X,X)=1$ and $\mathcal{L}(X)g=0$.
\end{enumerate}
\end{theorem}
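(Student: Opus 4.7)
The plan is to establish the cycle (iii) $\Rightarrow$ (ii) $\Rightarrow$ (i) $\Rightarrow$ (iii). The easy implication (iii) $\Rightarrow$ (ii) is a one-line computation: given an invariant metric $g$ with $g(X,X) = 1$, set $\eta(Y) = g(X,Y)$. Then $\eta(X) = 1$ and, using $\mathcal{L}_X X = 0$, one has $\mathcal{L}_X \eta = (\mathcal{L}_X g)(X,\cdot) = 0$.

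For (i) $\Rightarrow$ (iii) I would use averaging. A regular and proper $X$ whose period function is the constant $T$ generates a smooth $S^1$-action on $M$ via its flow $\psi_t$, with $t \in \R/T\Z$. Starting from any Riemannian metric $g_0$, the average $\bar g := \frac{1}{T}\int_0^T \psi_t^* g_0 \,dt$ is $X$-invariant. The function $\bar g(X,X)$ is positive and constant on orbits, and the splitting $TM = \R X \oplus X^{\perp_{\bar g}}$ is itself $X$-invariant. A final rescaling in the $X$-direction, leaving the perpendicular distribution untouched, produces a metric $g$ with $\mathcal{L}_X g = 0$ and $g(X,X) = 1$.

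The crux of the theorem is (ii) $\Rightarrow$ (i). From $\eta(X) = 1$ and $\mathcal{L}_X \eta = 0$, Cartan's formula yields $\iota_X d\eta = 0$, and hence the flow-invariance $\psi_t^* d\eta = d\eta$. Since $\eta$ evaluated along the tangent to an orbit $\gamma_p$ is identically $1$, the period at $p$ satisfies
\[
T(p) \;=\; \int_{\gamma_p}\eta.
\]
To show $T$ is locally constant, choose a short transversal $c \colon [0,\epsilon] \to M$ to $X$ and form the tube $\sigma(s,u) = \psi_{uT(c(s))}(c(s))$ on $[0,\epsilon] \times [0,1]$. Applying Stokes' theorem to $\eta$ on $\sigma$, the boundary pieces along $u=0$ and $u=1$ coincide with the same transversal (since $\psi_{T(c(s))}(c(s)) = c(s)$) and cancel, while the closed-orbit pieces at $s=0,\epsilon$ contribute $T(c(\epsilon)) - T(c(0))$. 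On the other hand, a direct computation of $\sigma^* d\eta$ using $\sigma_*\partial_u = T(c(s))X$ and the flow-invariance of $d\eta$ reduces the integrand to a multiple of $d\eta(c'(s),X)$, which vanishes by $\iota_X d\eta = 0$. Hence $T(c(\epsilon)) = T(c(0))$, so $T$ is locally constant, and therefore constant on connected $M$.

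The main obstacle I expect is justifying the Stokes computation when the period function is not \emph{a priori} smooth. This is precisely where the hypotheses ``regular'' and ``proper'' do the work: they guarantee that $M$ is locally of the form $V \times \R$, with $X$ the translation field along $\R$, so that the closed orbits form the fibres of a smooth fibration and $T$ is smooth on the open set where it is finite. In the alternative scenario where some orbit is non-periodic, regularity forces all orbits to be non-periodic, $T \equiv \infty$ is trivially constant, and $\eta$ and $g$ can instead be produced by a partition-of-unity construction relative to a global cross-section to the flow.
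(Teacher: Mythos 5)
The paper does not prove this statement at all: it is quoted verbatim from Tanno \cite{Tan1} (``we state a theorem due to Tanno'') and used as a black box in the proof of Theorem \ref{main}, so there is no in-paper argument to compare against. Your proposal is a correct, self-contained proof, and the mechanism you use for the hard implication (ii) $\Rightarrow$ (i) --- deduce $\iota(X)d\eta=0$ from Cartan's formula, write the period as $\int_{\gamma_p}\eta$, and kill the variation of the period across a transversal tube by Stokes --- is essentially the classical argument of Tanno (reproduced in Blair's book), where one shows $d\lambda_X$ annihilates directions transverse to the flow. The implications (iii) $\Rightarrow$ (ii) (contract the invariant metric with $X$) and (i) $\Rightarrow$ (iii) (average over the induced $S^1$-action and rescale along $\R X$ using the invariance of $\bar g(X,X)$ and of the orthogonal splitting) are both fine.

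Two points deserve slightly more care than your sketch gives them, though you have correctly identified both as the places where ``regular and proper'' enter. First, the Stokes computation needs the tube $\sigma$ to be at least $C^1$, hence needs $\lambda_X$ to be $C^1$ along the transversal; this follows because in a regular flow box the first return to a local cross-section is the identity and the first-return time is smooth by the implicit function theorem, but that deduction (and the fact that the set of points on periodic orbits is both open and closed, so the finite and infinite cases do not mix on a connected $M$) is the actual content of the regularity hypothesis and should be spelled out rather than asserted. Second, in the case $\lambda_X\equiv\infty$ the phrase ``partition-of-unity construction relative to a global cross-section'' is doing real work: one should say that properness and regularity make $M\to M/X$ a principal $\R$-bundle, hence trivial, so $M\cong N\times\R$ with $X=\partial_t$, and then $\eta=dt$ and $g=g_N+dt\otimes dt$ do the job. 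Neither issue is a gap in the idea, only in the level of detail.
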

In the above theorem, the period function $\lambda_X:M\to \R$ is defined by
\begin{equation}
 \lambda_X(p) = \inf\{t>0|\exp(tX)\cdot p = p\}.
\end{equation}
If $M$ is noncompact, the value $\lambda_X(p)=\infty$ is possible.  Part (iii) of the above tells us that $X$ is a unit Killing field for the metric $g$.  Using this result, Tanno was able to give a simple proof (which is reproduced in \cite{Blair3}) of the Boothby-Wang theorem \cite{BW}.  

\section{The structure of regular almost $\S$-manifolds}
As noted above, from \cite{BLY}, a compact manifold with regular normal $f$-structure is a principal torus bundle over a complex manifold $N$, and $N$ is K\"ahler if $M$ is a $\K$-manifold.  If $M$ is an $\S$-manifold with $\Phi_g=-d\eta^i$ for each $i$, then by \cite{Saenz}, the symplectic form on $N$ is integral.  We now dispense with the requirement that the $f$-structure on $M$ be normal, and state a similar result for almost $\S$-manifolds.
\begin{theorem}\label{main}
 Let $M$ be a compact manifold of dimension $2n+k$ equipped with a regular almost $\S$-structure $(\phi,\tilde{\xi}_i,\tilde{\eta}^i,\tilde{g})$ of rank $2n$.  Then there exists an almost $\S$-structure $(\phi,\xi_i,\eta^i,g)$ on $M$ for which the vector fields $\xi_1,\ldots,\xi_k$ are the infinitesimal generators of a free and effective $\mathbb{T}^k$-action on $M$. Moreover, the quotient $N=M/\mathbb{T}^k$ is a smooth symplectic manifold of dimension $2n$, and if the $\alpha^i$ such that $d\tilde{\eta}^i = -\alpha^i\Phi_{\tilde{g}}$ are not all zero, then the symplectic form on $N$ is a real multiple of an integral symplectic form.
\end{theorem}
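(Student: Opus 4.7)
My plan is to proceed in three stages: (i) rescale and reparametrize the frame $\tilde\xi_i$ to obtain a free $\mathbb{T}^k$-action; (ii) descend $\Phi_g$ to a symplectic form on the quotient; (iii) use Chern-Weil theory to establish integrality.

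For stage (i), applying Theorem \ref{Tanno} to each $\tilde\xi_i$, using $\tilde\eta^i(\tilde\xi_i)=1$ and $\mathcal{L}(\tilde\xi_i)\tilde\eta^i=0$ from Proposition \ref{ab}, shows that the period $\lambda_i$ of $\tilde\xi_i$ is a positive constant. Set $\hat\xi_i=\lambda_i^{-1}\tilde\xi_i$ and $\hat\eta^i=\lambda_i\tilde\eta^i$, and replace $\tilde g$ by the unique metric that restricts to $\tilde g$ on $E\otimes E$, makes $E\perp T$, and is compatible with the new $\hat\eta^i$; since $\phi X\in E$ for every $X$, the fundamental 2-form is unchanged, $\Phi_g=\Phi_{\tilde g}$. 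By Proposition \ref{aa} the $\hat\xi_i$ commute, so they integrate to an $\R^k$-action whose stabilizer $\Lambda_p$ at each $p$ is a full-rank lattice (the leaves of $T$ are compact) containing $\Z^k$. Since the $\hat\xi_i$ form a smooth commuting frame, $\Lambda_p$ is locally constant, hence by connectedness of $M$ equal to a fixed $\Lambda\subset\R^k$. Choosing a basis $v_1,\ldots,v_k$ of $\Lambda$ and setting $\xi_i=\sum_j(v_i)_j\hat\xi_j$, the associated $\R^k$-action has kernel exactly $\Z^k$, giving a free and effective $\mathbb{T}^k$-action.

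For stage (ii), let $\eta^i$ be the coframe dual to $\xi_i$ and once more choose $g$ compatible with these; the change-of-basis formula shows that $(\phi,\xi_i,\eta^i,g)$ is an almost $\S$-structure with new constants $\alpha^i$ which vanish for all $i$ if and only if the original $\tilde\alpha^i$ do. Compactness of $M$ and freeness of the action imply $N=M/\mathbb{T}^k$ is a smooth manifold of dimension $2n$ and $\pi:M\to N$ is a principal torus bundle. Cartan's formula combined with $d\Phi_g=0$ and $\iota(\xi_i)\Phi_g=0$ (Lemma \ref{lemmy}) gives $\mathcal{L}(\xi_i)\Phi_g=0$, so $\Phi_g$ is basic; it descends to a closed 2-form $\omega$ on $N$ with $\pi^*\omega=\Phi_g$, and nondegeneracy of $\omega$ follows from nondegeneracy of $\Phi_g|_{E\otimes E}$.

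For stage (iii), I regard the $\R^k$-valued 1-form with components $\eta^i$ as a connection on the principal bundle $\pi:M\to N$; its curvature has components $d\eta^i=-\alpha^i\pi^*\omega$. By Chern-Weil, each $[-\alpha^i\omega/2\pi]\in H^2(N;\R)$ lies in the image of $H^2(N;\Z)$, so if some $\alpha^i\neq 0$ (equivalently, some $\tilde\alpha^i\neq 0$) then $[\omega]$ is a real multiple of an integral class. The main obstacle is stage (i): establishing that $\Lambda_p$ is genuinely constant on $M$ and then verifying that the linear change of frame preserves the almost $\S$-structure axioms. Once the free $\mathbb{T}^k$-action is in hand, stages (ii) and (iii) reduce to standard basic-form descent and Chern-Weil.
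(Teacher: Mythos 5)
Your overall strategy coincides with the paper's for the first two stages: both apply Tanno's theorem (via Propositions \ref{aa} and \ref{ab}) to get constant periods, rescale the frame and coframe, observe that the metric can be adjusted only along $T$ so that $\Phi_g=\Phi_{\tilde g}$, and then descend $\Phi_g$ to a closed nondegenerate $2$-form on $N$ using horizontality and invariance. Where you genuinely diverge is the integrality step: the paper quotients by the $\mathbb{T}^{k-1}$-subaction generated by $\xi_2,\ldots,\xi_k$ to exhibit an intermediate space $P=M/\mathbb{T}^{k-1}$ as a classical Boothby--Wang fibration over $\left(N,\tfrac{\alpha^1}{\lambda_1}\Omega\right)$ and quotes the integrality of the symplectic form of a Boothby--Wang fibration, whereas you apply Chern--Weil directly to the $\mathbb{T}^k$-connection $\boldsymbol{\eta}$ and read off integrality of $[\alpha^i\omega]$ (up to normalization) from the curvature components. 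Your route is more direct and avoids constructing the intermediate quotient; the paper's route makes the ``generalized Boothby--Wang'' analogy explicit and reuses the classical theorem. Both are sound.

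Two points in your stage (i) deserve attention. First, the rescaling is inverted: if $\tilde\xi_i$ has period $\lambda_i$, then $\lambda_i^{-1}\tilde\xi_i$ has period $\lambda_i^2$; you want $\xi_i=\lambda_i\tilde\xi_i$ and $\eta^i=\lambda_i^{-1}\tilde\eta^i$, as in the paper. Second, you correctly identify a subtlety the paper glosses over --- that period~$1$ for each $\xi_i$ separately plus commutativity gives only $\Z^k\subseteq\Lambda_p$, so the induced $\R^k/\Z^k$-action could a priori have finite stabilizers --- but your fix rests on the assertion that $p\mapsto\Lambda_p$ is locally constant, which you do not prove. Closedness of the fixed-point set of $\exp(v)$ gives only upper semicontinuity of the stabilizer, so you still need an argument (e.g.\ local triviality of the Palais fibration by leaves, or invariance of the leafwise flat structure) to rule out jumps; as written this is the one real gap in your proposal, and note that after passing to a basis of $\Lambda$ the constants $\alpha^i$ get replaced by invertible linear combinations, which is harmless but should be said when you re-verify the almost $\S$-axioms.
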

\begin{proof}
 By assumption, the vector fields $\tilde{\xi}_1,\ldots, \tilde{\xi}_k$ are regular, independent and proper, and by Proposition \ref{kstr}, the distribution $T=\Span\{\tilde{\xi}_1,\ldots,\tilde{\xi}_k\}$ is involutive.  Thus, by the results of Palais, $N=M/T$ is a smooth manifold, and $\pi:M\to N$ is a smooth fibration whose fibres are the leaves of the distribution $T$.  Since $M$ is compact, the fibres are compact and isomorphic \cite{Palais}.  For each $i=1,\ldots,k$, we have $\tilde{\eta}^i(\tilde{\xi}_i)=1$ and $\mathcal{L}(\tilde{\xi}_i)\tilde{\eta}^i = 0$.  Thus, by Theorem \ref{Tanno}, the period functions $\lambda_i = \lambda_{\tilde{\xi}_i}$ are constant.  We rescale by setting $\xi_i = \lambda_i\tilde{\xi}_i$ and $\eta^i=\frac{1}{\lambda_i}\tilde{\eta}^i$.  We still have $\eta^i(\xi_j)=\delta^i_j$, and note that the associated metric $g$ for which $(\phi,\xi_i,\eta^i,g)$ is an almost $\S$-structure differs from $\tilde{g}$ only along $T$, so that $\Phi_g = \Phi_{\tilde{g}}$.  Each $\xi_i$ now has period 1, and since the vector fields $\xi_i$ all commute, they are the generators of a free and effective $\mathbb{T}^k$-action on $M$.  The argument for local triviality is the same as in \cite{BLY}, so we do not repeat it here.  Thus, we have that $M$ is a principal $\mathbb{T}^k$-bundle over $N=M/T$.  The infinitesimal action of $\R^k$ is given by
\[
 X=(t^1,\ldots, t^k)\mapsto X_M = \sum t^i\xi_i,
\]
from which we see that $\boldsymbol{\eta} = (\eta^1,\ldots, \eta^k)$ is a connection 1-form on $M$: we have $\iota(X_M)\eta = X$ and $\mathcal{L}(X_M)\eta = 0$ for all $X\in \R^k$.

Now, we note that the fundamental 2-form $\Phi_g$ is horizontal and invariant, since $\iota(X)\Phi_g = \mathcal{L}(X)\Phi_g = 0$ for all $X\in\Gamma(M,T)$, and thus there exists a 2-form $\Omega$ on $N$ such that $\pi^*\Omega = \Phi_g$.  Since $\pi^*d\Omega = d\Phi_g = 0$, $\Omega$ is closed, and since $\pi^*\Omega^n = \Phi_g^n \neq 0$, $\Omega$ is non-degenerate, and hence symplectic.

Finally, let us suppose that one of the $\alpha^i$ are non-zero; without loss of generality, let's say $\alpha^1\neq 0$.  By the same argument as above, the vector fields $\xi_2,\ldots, \xi_k$ generate a free $\mathbb{T}^{k-1}$-action on $M$, giving us a fibration $p:M\to P$.  Now, since $\mathcal{L}(\xi_i)\xi_1 = \mathcal{L}(\xi_i)\eta^1 = 0$ for $i=2,\ldots, k$, the vector field $\xi_1$ and 1-form $\eta^1$ are invariant under the $\mathbb{T}^{k-1}$-action.  We can thus define a 1-form $\eta$ on $P$ by $\eta(X) = \eta^1(\tilde{p}X)$, where $\tilde{p}X$ denotes the horizontal lift of $X$ with respect to the connection 1-form defined by $\eta^2,\ldots, \eta^k$, and a vector field $\xi$ on $P$ by $\xi = p_*\xi_1$.  Note that $d\eta(X,Y) = d\eta^1(\tilde{p}X,\tilde{p}Y)$.  We then have $\eta(\xi) = 1$, and $\mathcal{L}(\xi)\eta = \iota(\xi^1)d\eta^1 = 0$, so that Theorem \ref{Tanno} applies to the pair $(\eta,\xi)$.  It follows that $\xi$ generates a free action of $S^1=\mathbb{R}/\mathbb{Z}$ on $P$, giving us the $\mathbb{T}^1$-bundle structure $q:P\to N$.  Since $\pi = q\circ p$, it follows that
\[
 d\eta(X,Y) = d\eta^1(\tilde{p}X,\tilde{p}Y) = -\frac{\alpha^1}{\lambda_1}(\pi^*\Omega)(\tilde{p}X,\tilde{p}Y) = -\frac{\alpha^1}{\lambda_1}q^*\Omega(X,Y).
\]
Thus, $P$ is a Boothby-Wang fibration over $(N,\frac{\alpha^1}{\lambda_1}\Omega)$, from which it follows that the symplectic form $\frac{\alpha^1}{\lambda}\Omega$ must be integral (see \cite{Kob}), and hence $\Omega$ is a real multiple of an integral symplectic form.
\end{proof}
\begin{remark}
 Note that since the last part of the argument is valid for any pair of nonzero constants $\alpha^i,\alpha^j$, from which it follows that for each $i,j$ for which $\alpha^i$ and $\alpha^j$ are nonzero, we must have $\dfrac{\alpha^i}{\lambda_i}\cdot\dfrac{\lambda_j}{\alpha^j}\in\mathbb{Q}$.
\end{remark}

Conversely, we have the following theorem:
\begin{theorem}
Suppose that $M$ is a principal $\mathbb{T}^k$-bundle over a symplectic manifold $(N,\omega)$, equipped with connection 1-form $\boldsymbol{\eta} = (\eta^1,\ldots, \eta^k)$ such that there exist constants $\alpha^1, \ldots, \alpha^k$ for which $d\eta^i = -\alpha^i\pi^*\omega$.  Then $M$ admits an almost $\S$-structure.
\end{theorem}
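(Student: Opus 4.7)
The plan is to reverse-engineer the construction sketched in the Introduction for the contact case. I would begin by choosing an $\omega$-compatible almost complex structure $J$ on $N$ and setting $G(X,Y) := \omega(X,JY)$; this is a $J$-invariant Riemannian metric on $N$. Let $\xi_1,\ldots,\xi_k$ be the fundamental vector fields of the $\mathbb{T}^k$-action, which span the vertical bundle $V = \ker\pi_*$ and satisfy $\eta^i(\xi_j) = \delta^i_j$; let $H = \bigcap_i \ker\eta^i$ be the horizontal distribution cut out by $\boldsymbol{\eta}$, and write $\tpi$ for the associated horizontal lift. I would then define
\[
\phi X := \tpi(J\pi_* X), \qquad g := \pi^* G + \sum_{i=1}^k \eta^i\otimes\eta^i,
\]
and aim to show that $(\phi,\xi_i,\eta^i,g)$ is an almost $\S$-structure on $M$.

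The verification is mostly bookkeeping. Since $\pi_*\xi_i = 0$, one has $\phi\xi_i = 0$ and $\eta^j\circ\phi = 0$, while for $X\in\Gamma(M,H)$ one computes $\phi^2 X = \tpi(J^2\pi_* X) = -X$; combining these gives $\phi^2 = -\Id + \sum_i \eta^i\otimes\xi_i$, so in particular $\phi^3+\phi = 0$. For the compatibility condition \eqref{phig} I would decompose $X = X_H + \sum_i \eta^i(X)\xi_i$ and similarly for $Y$: using $H\perp V$ together with $g(\xi_i,\xi_j) = \delta_{ij}$ and the $J$-invariance of $G$, both sides of \eqref{phig} collapse to $\pi^*G(X_H,Y_H) + \sum_i\eta^i(X)\eta^i(Y)$.

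To conclude, I would identify the fundamental 2-form. Since $\phi X$ is always horizontal, Lemma \ref{lemmy} gives $\iota(\xi_i)\Phi_g = 0$, so it suffices to evaluate $\Phi_g$ on horizontal lifts, where
\[
\Phi_g(\tpi X,\tpi Y) = G(JX,Y) = \omega(JX,JY) = \omega(X,Y).
\]
Hence $\Phi_g = \pi^*\omega$, which is closed, so $(\phi,\xi_i,\eta^i,g)$ is already an almost $\K$-structure, and the hypothesis $d\eta^i = -\alpha^i\pi^*\omega$ becomes the almost $\S$-condition $d\eta^i = -\alpha^i\Phi_g$. The only real subtlety I foresee is keeping signs straight: the choice $G(\cdot,\cdot) = \omega(\cdot,J\cdot)$ rather than $\omega(J\cdot,\cdot)$ is precisely what produces $\Phi_g = +\pi^*\omega$ rather than $-\pi^*\omega$, and so matches the sign of $\alpha^i$ in the hypothesis to that in the paper's definition of almost $\S$-structure.
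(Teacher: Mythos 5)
Your construction is exactly the one the paper uses (it cites the analogous argument of Blair et al.\ and leaves the verification as routine): lift a compatible $J$ horizontally to define $\phi$, take $g=\pi^*G+\sum\eta^i\otimes\eta^i$, and observe $\Phi_g=\pi^*\omega$ so that $d\eta^i=-\alpha^i\Phi_g$. Your write-up simply supplies the bookkeeping the paper omits, and your sign conventions are internally consistent, so the proposal is correct and essentially identical in approach.
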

\begin{proof}
The proof is essentially the same as the proof given in \cite{Blair2} when $N$ is K\"ahler, if we omit the proof of normality.  Given a choice of compatible almost complex structure $J$ and associated metric $G$, we can define an $f$-structure $\varphi$ by $\varphi X = \tilde{\pi}J\pi_* X$, where $\tilde{\pi}$ denotes the horizontal lift with respect to $\boldsymbol{\eta}$.  If we let $\xi_1,\ldots, \xi_k$ denote vertical vectors such that $\eta^i(\xi_j) = \delta^i_j$, and define the metric $g$ by
\[
 g(X,Y) = \pi^*G(X,Y)+\sum\eta^i(X)\eta^i(Y),
\]
then it's straightforward to check that the data $(\varphi,\xi_i,\eta^j,g)$ defines an almost $\S$-structure on $M$.  (Note that $\Phi_g = \pi^*\omega$, so that $d\eta^i = -\alpha^i\Phi_g$.)
\end{proof}
\begin{remark}
 We can also use the results of Tanno \cite{Tan1} to show that the vector fields $\xi_1,\ldots, \xi_k$ of an almost $\S$-structure are Killing.  Let $\tpi$ denote the horizontal lift defined by $\boldsymbol{\eta}$.  Then we can define a Riemannian metric $G$ on $N$ by $G(X,Y) = g(\tpi X,\tpi Y)$ for any $X,Y\in\Gamma(N,TN)$, where $g$ is the metric of the almost $\S$-structure on $M$.  It follows that $g = \pi^*G + \sum \eta^i\otimes\eta^i$, whence $g(\xi_i,\xi_i)=1$ and $\mathcal{L}(\xi_i)g = 0$ for $i=1\ldots, k$.  Moreover, the endomorphism field $J\in\Gamma(N,\End(TN))$ defined by $JX = \pi_*\phi\tpi X$ is easily seen to be an almost complex structure on $N$ that is compatible with $G$, and the symplectic form $\Omega$ then satisfies $\Omega(X,Y) = G(X,JY)$.
\end{remark}
\begin{remark}
 If $M$ is only an almost $\K$-manifold, it is not clear that we can expect any analogous result to hold, since the proof in \cite{BLY} for a $\K$-manifold does not work without normality, and Tanno's theorem cannot be applied if $\mathcal{L}(\xi_i)\eta^j \neq 0$ for all $i,j$, and this need not hold if $d\eta^j$ is not a multiple of $\Phi_g$.
\end{remark}
\begin{remark}
 If $M$ is noncompact, then as noted below the statement of Tanno's theorem, the period $\lambda_i$ of one of the $\xi_i$ could be infinite, in which case $\xi_i$ generates an $\R$-action on $M$ instead of an $S^1$-action.  
\end{remark}
\section{Symplectization and Jacobi structures}
We conclude this paper with a discussion of the relationship between almost $\S$-structures and related geometries intended to reinforce the view that almost $\S$-structures deserve to be viewed as higher corank analogues of contact structures.  (However, see also \cite{vE2} for the notion of $k$-contact structures, which, from the point of view of Heisenberg calculus, are also deserving of the title of higher corank contact structure.  From this perspective, almost $\S$-structures are perhaps more analogous to contact metric structures, or even strongly pseudoconvex CR structures, although they are not CR-integrable in general.)

Recall that a stable complex structure on a manifold $M$ is a complex structure defined on the fibres of $TM\oplus \R^k$ for some $k$.  Given an $f\cdot$pk-structure $(\phi,\xi_i,\eta^j)$ on $M$, we obtain a stable complex structure $J\in\Gamma(M,\End(TM\oplus\R^k))$ by setting $JX=\phi X$ for $X\in\Gamma(M,E)$, and defining $J\xi_i = \tau_i$ and $J\tau_i = -\xi_i$, where $\tau_1,\ldots,\tau_k$ is a basis for $\R^k$.  As explained in \cite{GGK}, a stable complex structure determines a Spin$^c$-structure on $M$.

Alternatively, (and with some abuse of notation), we can think of the above complex structure on each fibre $T_xM\times \R^k$ as coming from an almost complex structure on $M\times\R^k$ obtained from to the $f$-structure $\phi$.  With this point of view, we note that  it is possible to define a ``symplectization'' analogous to the symplectization of a cooriented contact manifold, provided that our $f\cdot$pk-structure is an almost $\S$-structure, with at least one of the $\alpha^j$ (such that $d\eta^j=-\alpha^j\Phi_g$) nonzero.  As above, we let $TM = E\oplus T$ denote the splitting of the tangent bundle determined by the $f$-structure, and let $E^0\cong T^*=\Span\{\eta^i\}\cong M\times \R^k$ denote the annihilator of $E$.  It is then possible to find an open connected symplectic submanifold $E^0_+$ of $T^*M$ whose tangent bundle is $T_xM\times \R^k$. 
%Intrinsically, we can define $E^0_+$ to be the subset of $E^0\setminus 0$ such that $\eta^i(M)\subset E^0_+$ for all $i$, and consider the 2-form $\omega$ given by the pullback to $E^0_+$ of the standard symplectic structure on $T^*M$.  
For concreteness, let us use the identification $E^0\cong M\times \R^k$, and with respect to coordinates $(x,t_1,\ldots, t_k)$, let
\[
 \alpha = \sum_{i=1}^k t_i\eta^i,
\]
and define $\omega = -d\alpha$.  (We are abusing notation here slightly; technically we should write $\pi^*\eta^i$ in place of $\eta^i$, where $\pi:M\times\R^k\to M$ is the projection onto the first factor.) Using the fact that $d\eta^i = -\alpha^i\Phi_g$ for each $i$, we have
\[
 \omega = \sum \eta^j\wedge dt_j +\left(\sum t_j\alpha^j\right)\Phi_g.
\]
Define $\tau\in C^\infty(E^0)$ to be the function given in coordinates by $\tau = \sum \alpha^j t_j$.  Note that since $\eta^i\wedge\eta^i = dt_i\wedge dt_i=0$, we have
\[
 \left(\sum_{i=1}^k \eta^j\wedge dt_j\right)^k = k!\,\eta^1\wedge dt_1\wedge\cdots \wedge\eta^k\wedge dt_k.
\]
We also note that $\Phi_g^m=0$ for $m>n$. Thus, using the binomial theorem, we find that the top-degree form $\omega^{n+k}$ has only one nonzero term; namely,
\[
 \omega^{n+k} = \frac{(n+k)!}{n!}\eta^1\wedge dt_1\cdots\wedge \eta^k\wedge dt_k\wedge(\tau\Phi_g)^n.
\]
Thus, $\omega^{n+k}$ is a volume form on the open subset $E^0_+$ of $E^0$ defined by $\tau>0$, and hence $\omega$ is a symplectic form on $E^0_+$.

Next, we will show that for certain choices of section $\eta\in \Gamma(M,E^0)$ we obtain a Jacobi structure on $M$ defined in a manner analogous to the Jacobi structure associated to a choice of contact form on a contact manifold.    We recall that a Jacobi structure on $M$ is given by a Lie bracket $\{\cdot,\cdot\}$ on $C^\infty(M)$ such that for any $f,g\in C^\infty(M)$ the support of $\{f,g\}$ is contained in the intersection of the supports of $f$ and $g$.  Jacobi structures were introduced independently by Kirillov \cite{Kir2} and Lichnerowicz \cite{Lich}; a good introduction can be found in \cite{Marle}.  

Again, we assume $M$ is equipped with an almost $\S$-structure with the constants $\alpha^j$ such that $d\eta^j = -\alpha^j\Phi_g$ not all zero.  Our first goal is to define a notion of a Hamiltonian vector field $X_f$ associated to each function $f\in C^\infty(M)$.  To begin with, let $\xi = \sum b^j\xi_j$ be an arbitrary section of $T=\ker\phi$, and let $\eta = \sum c_j\eta^j$ be an arbitrary section of $E^0\cong T^*$.  We will narrow down the possibilities for $\xi$ and $\eta$ as we consider the properties we wish the vector fields $X_f$ to satisfy.  The idea is to generalize the approach used to define Hamiltonian vector fields on a contact manifold $(M,\eta)$.  Recall that on manifold equipped with a contact form $\eta$, where we define $\Phi = -d\eta$, the Reeb vector field $\xi$ is defined by $\iota(\xi)\eta = 1$ and $\iota(\xi)\Phi = 0$.  A contact Hamiltonian vector field $X_f$ satisfies the equations $\iota(X_f)\eta = f$ and $\iota(X_f)\Phi = df - (\xi\cdot f)\eta$.  Lichnerowicz showed in \cite{Lich1} that these are the necessary and sufficient conditions for each $X_f$ to be an infinitesimal symmetry of the contact structure: it follows that for each $f\in C^\infty(M)$, $\mathcal{L}(X_f)\eta = (\xi\cdot f)\eta$.

We wish to impose similar conditions on $\xi$, $\eta$ and (the yet to be defined) $X_f$ in the case of almost $\S$-manifolds.  We already know that $\iota(\xi)\Phi_g = 0$, by Lemma \ref{lemmy}, so we begin by adding the requirement that $\eta(\xi)=\sum b^jc_j = 1$.  Next, we give our definition of a Hamiltonian vector field:
\begin{definition}
 Let $\eta$ and $\xi$ be as above.  For any $f\in C^\infty(M)$, we define the {\em Hamiltonian vector field} associated to $f$ by the equations
\begin{align}
 \iota(X_f)\eta^j &= \alpha^j f, \text{\rm for } j=1,\ldots, k,\label{ham1}\\ 
 \iota(X_f)\Phi_g &= df - (\xi\cdot f)\eta.\label{ham2}
\end{align}
\end{definition}
\begin{remark}
 Note that the above equations uniquely define $X_f$, by the nondegeneracy of the restriction of $\Phi$ to $E=\Img \phi$.  The constants $\alpha^j$ are the same ones such that $d\eta^j = -\alpha^j\Phi_g$.  One can check that if we began with $a^j$ in place of the $\alpha^j$, we would be forced to take $a^j=\alpha^j$ for consistency reasons.  (In particular this will be necessary if the bracket we define below is to be a Lie bracket.)  Moreover, this gives us the identity
\[
 \mathcal{L}(X_f)\eta^j = \alpha^j(\xi\cdot f)\eta
\]
for each $j=1,\ldots, k$; we would otherwise have an unwanted term of the form $(a^j-\alpha^j)df$.  Note that on the right-hand side of the above equation we have $\eta$ and not $\eta^j$; this is unavoidable with our definition of $X_f$.
\end{remark}
We can fix the coefficients of $\xi$ by requiring that $\xi$ be the Hamiltonian vector field associated to the constant function 1, as is standard for Jacobi structures (see \cite{Marle}).  It is easy to see that \eqref{ham1} then immediately forces us to take $\xi = \sum \alpha^j\xi_j$; that is, the coefficients $b^j$ are equal the constants $\alpha^j$.  Thus, $\xi$ is essentially determined by the almost $\S$ structure, although $\eta$ is constrained only by the condition $\eta(\xi)=1$, so the Jacobi structure we define below cannot be considered entirely canonical (as one might expect).  From the requirement that $\eta(\xi)=1$ it follows that for each $f\in C^\infty(M)$, we have
\[
 \mathcal{L}(X_f)\eta = \sum c_j \mathcal{L}(X_f)\eta^j = \sum c_j\alpha^j (\xi\cdot f)\eta = (\xi\cdot f)\eta,
\]
again in analogy with the contact case. Note that the normalization $\eta(\xi)=1$ also implies that $d\eta = -\Phi_g$. We are now ready to define our bracket on $C^\infty(M)$.
\begin{definition}
Let $M$ be a manifold with almost $\S$-structure, with constants $\alpha^j$ not all zero.  Let $\xi = \sum \alpha^j\xi_j$, and let $\eta$ be a section of $E^0$ such that $\eta(\xi)=1$.  We then define a bracket on $C^\infty(M)$ by
\begin{equation}\label{jacobi}
 \{f,g\} = \iota([X_f,X_g])\eta.
\end{equation}
\end{definition}
The bracket is clearly antisymmetric, and one checks (using the identity $\iota([X,Y]) = [\mathcal{L}(X),\iota(Y)]$) that
\[
 \{f,g\} = X_f\cdot g - X_g\cdot f + \Phi_g(X_f,X_g) = X_f\cdot g - (\xi\cdot f)g.
\]
Note that since the definition of the Hamiltonian vector fields depended on the choice of $\eta$, the bracket depends on $\eta$, even though $\eta$ no longer appears explicitly in either of the above expressions for the bracket.  From the latter equality we see that the support of $\{f,g\}$ is contained in the support of $g$, and by antisymmetry it must be contained in the support of $f$ as well.  Thus, the bracket given by \eqref{jacobi} is a Jacobi bracket provided we can verify the Jacobi identity.  Since the Jacobi identity is valid for the Lie bracket on vector fields, it suffices to prove the following:
\begin{proposition}\label{hamprop}
 Let $\{f,g\}$ be the bracket on $C^\infty(M)$ given by \eqref{jacobi}.
 Then the vector field $X_{\{f,g\}}$ corresponding to the function $\{f,g\}$ is given by $X_{\{f,g\}} = [X_f,X_g]$.
\end{proposition}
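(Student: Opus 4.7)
The plan is to exploit the uniqueness of the Hamiltonian vector field. Equations \eqref{ham1} and \eqref{ham2} determine $X_h$ uniquely: \eqref{ham1} forces the $T$-component of $X_h$ to equal $h\xi$, after which \eqref{ham2} combined with the nondegeneracy of $\Phi_g$ on $E\otimes E$ determines the $E$-component. It therefore suffices to verify that $Y := [X_f, X_g]$ satisfies $\iota(Y)\eta^j = \alpha^j\{f,g\}$ for each $j$, and $\iota(Y)\Phi_g = d\{f,g\} - (\xi\cdot\{f,g\})\eta$.

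For the first equation, apply the standard commutator identity $\iota([X,Y]) = \mathcal{L}(X)\iota(Y) - \iota(Y)\mathcal{L}(X)$ to $\eta^j$. Substituting $\iota(X_g)\eta^j = \alpha^j g$ together with the identity $\mathcal{L}(X_f)\eta^j = \alpha^j(\xi\cdot f)\eta$ derived in the preceding remark, and using the observation $\eta(X_g) = g$ (which follows from $\eta(\xi) = \sum c_j\alpha^j = 1$), gives $\iota(Y)\eta^j = \alpha^j(X_f\cdot g - (\xi\cdot f)g) = \alpha^j\{f,g\}$.

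For the second equation, the same commutator identity yields $\iota(Y)\Phi_g = \mathcal{L}(X_f)(dg - (\xi\cdot g)\eta) - \iota(X_g)\mathcal{L}(X_f)\Phi_g$. Cartan's formula together with $d\Phi_g = 0$ and $d\eta = -\Phi_g$ gives $\mathcal{L}(X_f)\Phi_g = -d(\xi\cdot f)\wedge\eta + (\xi\cdot f)\Phi_g$. Expanding both contributions with the Leibniz rule and $\eta(X_g) = g$, the $(\xi\cdot f)(\xi\cdot g)\eta$ terms cancel, the exact pieces recombine via $d((\xi\cdot f)g) = g\,d(\xi\cdot f) + (\xi\cdot f)\,dg$ into $d(X_f\cdot g - (\xi\cdot f)g) = d\{f,g\}$, and the remainder is $(X_g(\xi\cdot f) - X_f(\xi\cdot g))\eta$.

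The proof thus reduces to the identity $X_g(\xi\cdot f) - X_f(\xi\cdot g) = -\xi\cdot\{f,g\}$, and this is where I expect the main obstacle. I would handle it via an auxiliary claim: $[\xi, X_h] = X_{\xi\cdot h}$ for every $h\in C^\infty(M)$. By the same uniqueness argument, this follows once one checks that $[\xi, X_h]$ satisfies \eqref{ham1} and \eqref{ham2} with $h$ replaced by $\xi\cdot h$, which requires only $\mathcal{L}(\xi)\eta^j = 0$ (from Proposition~\ref{ab}), $\mathcal{L}(\xi)\eta = 0$, and $\mathcal{L}(\xi)\Phi_g = 0$ (a consequence of $\iota(\xi)\Phi_g = 0$, $d\Phi_g = 0$, and Cartan's formula). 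Granting $[\xi, X_f] = X_{\xi\cdot f}$, the desired identity reduces to algebra: substitute $\xi(X_f\cdot g) = X_f(\xi\cdot g) + X_{\xi\cdot f}\cdot g$ into the expansion $\xi\cdot\{f,g\} = \xi(X_f\cdot g) - (\xi\cdot\xi\cdot f)g - (\xi\cdot f)(\xi\cdot g)$, then combine with $X_{\xi\cdot f}\cdot g = \{\xi\cdot f, g\} + (\xi\cdot\xi\cdot f)g$ and the antisymmetry consequence $X_g(\xi\cdot f) = -\{\xi\cdot f, g\} + (\xi\cdot f)(\xi\cdot g)$.
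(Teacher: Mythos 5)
Your proposal is correct and follows essentially the same route as the paper: uniqueness of the Hamiltonian vector field, verification of the two defining equations via the commutator identity, and reduction of the remainder term to the derivation property of $\xi$ on the bracket through the key lemma $[\xi,X_h]=X_{\xi\cdot h}$. The only cosmetic difference is that you prove that lemma directly for $\xi=\sum\alpha^j\xi_j$, whereas the paper states it for each generator $\xi_i$ separately (Lemmas \ref{LL1} and \ref{LL2}) and then sums.
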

\begin{lemma}\label{LL1}
 For each $i=1,\ldots, k$, we have $[\xi_i,X_f] = X_{\xi_i\cdot f}$.
\end{lemma}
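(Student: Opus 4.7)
The plan is to use the uniqueness clause in the definition of $X_h$: since equations \eqref{ham1} and \eqref{ham2} determine $X_h$ for any $h \in C^\infty(M)$, it will suffice to verify that $[\xi_i, X_f]$ satisfies these two equations with $h = \xi_i \cdot f$ in place of $f$, after which the lemma follows at once.

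The main tool will be the commutator identity $\iota([X,Y]) = \mathcal{L}(X)\iota(Y) - \iota(Y)\mathcal{L}(X)$, which I would apply with $X = \xi_i$ and $Y = X_f$, first paired against $\eta^j$ and then against $\Phi_g$. For \eqref{ham1}, Proposition \ref{ab} gives $\mathcal{L}(\xi_i)\eta^j = 0$, whence
\[
 \iota([\xi_i,X_f])\eta^j \;=\; \mathcal{L}(\xi_i)(\alpha^j f) \;=\; \alpha^j(\xi_i\cdot f),
\]
which is precisely the relation required of $X_{\xi_i\cdot f}$. For \eqref{ham2}, I would first note that Cartan's formula together with Lemma \ref{lemmy} and $d\Phi_g=0$ gives $\mathcal{L}(\xi_i)\Phi_g = 0$, so
\[
 \iota([\xi_i,X_f])\Phi_g \;=\; \mathcal{L}(\xi_i)\bigl(df - (\xi\cdot f)\eta\bigr).
\]

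Expanding the right-hand side yields $d(\xi_i\cdot f) - (\xi_i\cdot(\xi\cdot f))\eta - (\xi\cdot f)\mathcal{L}(\xi_i)\eta$, and I need this to agree with $d(\xi_i\cdot f) - (\xi\cdot(\xi_i\cdot f))\eta$. Two ingredients close the gap. First, since $\xi = \sum \alpha^j\xi_j$ has constant coefficients and $[\xi_i,\xi_j]=0$ by Proposition \ref{aa}, we have $[\xi_i,\xi]=0$ and therefore $\xi_i\cdot(\xi\cdot f) = \xi\cdot(\xi_i\cdot f)$. Second, $\mathcal{L}(\xi_i)\eta = 0$: writing $\eta = \sum c_j\eta^j$, Proposition \ref{ab} reduces this to the $\xi_i$-invariance of the coefficients $c_j$, which is the tacit invariance condition on $\eta$ needed for the bracket \eqref{jacobi} to have the expected properties. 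This second point is the only real subtlety; once it is in hand both defining equations check out and uniqueness of the Hamiltonian vector field concludes the proof.
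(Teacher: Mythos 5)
Your proof is correct and follows essentially the same route as the paper's: uniqueness of the Hamiltonian vector field, the commutator identity $\iota([\xi_i,X_f]) = [\mathcal{L}(\xi_i),\iota(X_f)]$ applied to $\eta^j$ and to $\Phi_g$, together with $\mathcal{L}(\xi_i)\eta^j = 0$, $\mathcal{L}(\xi_i)\Phi_g = 0$, and $[\xi_i,\xi]=0$. The only point where you go beyond the paper is in explicitly flagging that $\mathcal{L}(\xi_i)\eta = 0$ requires the coefficients $c_j$ of $\eta = \sum c_j\eta^j$ to be $\xi_i$-invariant; the paper uses this tacitly in the same computation, so your version is a fair and slightly more careful rendering of the identical argument.
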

\begin{proof}
 From Propositions \ref{aa} and  \ref{ab}, we know that $[\xi_i,\xi_j]=0$ and $\mathcal{L}(\xi_i)\eta^j = 0$ for any $i,j\in\{1,\ldots,k\}$; from the latter it follows easily that $\mathcal{L}(\xi_i) \Phi_g= 0$ as well.  The result then follows from the uniqueness of Hamiltonian vector fields, since
\[
\iota([\xi_i,X_f])\eta^j = [\mathcal{L}(\xi_i),\iota(X_f)]\eta^j = \alpha^j \xi_i\cdot f, 
\]
and
\[
 \iota([\xi_i,X_f])\Phi_g = \mathcal{L}(\xi_i)(df - (\xi\cdot f)\eta) = d(\xi_i\cdot f) - (\xi\cdot(\xi_i\cdot f))\eta.\qedhere
\]
\end{proof}
\begin{lemma}\label{LL2}
 For each $i=1,\ldots, k$, we have $\xi_i\cdot\{f,g\} = \{\xi_i\cdot f, g\} + \{f, \xi_i\cdot g\}$.
\end{lemma}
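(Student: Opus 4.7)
The plan is to reduce everything to the explicit formula $\{f,g\} = X_f\cdot g - (\xi\cdot f)g$ derived immediately after the definition of the bracket, and then expand $\xi_i\cdot\{f,g\}$ using nothing more than the Leibniz rule together with Lemma \ref{LL1}. So the proof will be essentially a computation; the only conceptual ingredient is that $\xi_i$ commutes with $\xi = \sum\alpha^j\xi_j$, which follows from Proposition \ref{aa}.

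The first step is to write
\[
\xi_i\cdot\{f,g\} = \xi_i\cdot(X_f\cdot g) - \xi_i\cdot\bigl((\xi\cdot f)g\bigr).
\]
For the first term I would invoke the bracket identity $\xi_i\cdot(X_f\cdot g) = [\xi_i,X_f]\cdot g + X_f\cdot(\xi_i\cdot g)$, and then apply Lemma \ref{LL1} to replace $[\xi_i,X_f]$ by $X_{\xi_i\cdot f}$. For the second term, the ordinary Leibniz rule gives $\xi_i\cdot\bigl((\xi\cdot f)g\bigr) = (\xi_i\cdot(\xi\cdot f))g + (\xi\cdot f)(\xi_i\cdot g)$. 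Because $\xi = \sum_j \alpha^j\xi_j$ with constant coefficients and $[\xi_i,\xi_j]=0$ by Proposition \ref{aa}, we have $\xi_i\cdot(\xi\cdot f) = \xi\cdot(\xi_i\cdot f)$.

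Combining these and regrouping yields
\[
\xi_i\cdot\{f,g\} = \bigl(X_{\xi_i\cdot f}\cdot g - (\xi\cdot(\xi_i\cdot f))g\bigr) + \bigl(X_f\cdot(\xi_i\cdot g) - (\xi\cdot f)(\xi_i\cdot g)\bigr),
\]
and each parenthesised expression is, by the formula for the bracket, exactly $\{\xi_i\cdot f,g\}$ and $\{f,\xi_i\cdot g\}$ respectively.

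There is no real obstacle here — the derivation is a one-line calculation once Lemma \ref{LL1} is in hand. The only thing to be slightly careful about is keeping straight the distinction between the single vector field $\xi$ (which appears in the bracket formula via the term $\xi\cdot f$) and the individual frame vectors $\xi_i$; this is what makes the commutativity $[\xi_i,\xi]=0$ (coming from Proposition \ref{aa} together with the constancy of the $\alpha^j$) the key input that lets the two "$\xi$-terms" in the expansion line up correctly.
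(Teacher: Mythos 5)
Your proposal is correct and follows exactly the paper's own argument: expand $\xi_i\cdot\{f,g\}$ via the formula $\{f,g\}=X_f\cdot g-(\xi\cdot f)g$, use Lemma \ref{LL1} to replace $[\xi_i,X_f]$ by $X_{\xi_i\cdot f}$, and use $[\xi_i,\xi]=0$ to commute $\xi_i$ past $\xi$. You have simply written out the intermediate Leibniz-rule steps that the paper compresses into a single line.
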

\begin{proof}
 We have, using Lemma \ref{LL1} and the fact that $[\xi_i,\xi]=0$ in the second line,
\begin{align*}
\xi_i\cdot\{f,g\} & = \xi_i\cdot (X_f\cdot g) - \xi_i\cdot ((\xi\cdot f)g)\\
& =  X_f\cdot (\xi_i\cdot g) - (\xi\cdot f)(\xi_i\cdot g) + X_{\xi_i\cdot f}\cdot g - \xi(\xi_i\cdot f)g\\
& = \{f,\xi_i\cdot g\} +  \{\xi_i\cdot f,g\}.\qedhere
\end{align*}
\end{proof}
\begin{proof}[Proof of Proposition \ref{hamprop}]
 We need to show that $\iota([X_f,X_g])\eta^j = \alpha^j\{f,g\}$ for each $j=1,\ldots k$, and that $\iota([X_f,X_g])\Phi = d\{f,g\} - (\xi\cdot\{f,g\})\eta$.  First, since $\iota(X_g)\eta = \sum c_j\alpha^j g = g$, we have
\begin{align*}
 \iota([X_f,X_g])\eta^j & = \mathcal{L}(X_f)\eta^j(X_g) - \iota(X_g)\mathcal{L}(X_f)\eta^j\\
&=\alpha^j X_f\cdot g - \iota(X_g)(\alpha^j(\xi\cdot f)\eta = \alpha^j\{f,g\}.
\end{align*}
From Lemma \ref{LL2}, we have $\xi\cdot\{f,g\} = \{f,\xi\cdot g\}-\{g,\xi\cdot f\} = X_f\cdot(\xi\cdot g) - X_g\cdot(\xi\cdot f)$, and thus,
\begin{align*}
 \iota([X_f,X_g])\Phi_g & = \mathcal{L}(X_f)(dg - (\xi\cdot g)\eta) - \iota(X_g)(-d(\xi\cdot f)\wedge\eta + (\xi\cdot f)\Phi_g)\\
& = d(X_f\cdot g) - X_f\cdot(\xi\cdot g) - (\xi\cdot g)(\xi\cdot f)\eta+ X_g\cdot(\xi\cdot f)\eta\\
& \quad \quad  - gd(\xi\cdot f) - (\xi\cdot f)(dg-(\xi\cdot g)\eta)\\
& = d(X_f\cdot g - (\xi\cdot f)g) - (X_f\cdot(\xi\cdot g) - X_g\cdot(\xi\cdot f))\eta\\
& = d\{f,g\} - \xi\cdot\{f,g\}\eta.\qedhere
\end{align*}
\end{proof}
\section*{Acknowledgements}
The research for this article was made possible by a postdoctoral fellowship from Natural Sciences and Engineering Research Council of Canada, and by the University of California, Berkeley, the host institution for the fellowship.  The author would like to thank Alan Weinstein for several useful discussions and suggestions which helped to improve the paper.

%We can then define an elliptic Dirac operator associated to this almost complex structure on the non-compact manifold $M\times\R^k$, and this operator, in turn descends to an (again elliptic) operator on $M$.  This approach is used in \cite{Nic} in the case where $M$ is a contact manifold ($k=1$).  We see in \cite{Nic} that in the contact case, the difference between the resulting elliptic operator and the operator $\D$ given by our construction is given by the Lie derivative in the direction of the Reeb vector field.  This suggests that one way to think of our operator is as a deformation of a corresponding elliptic differential operator.  

\bibliographystyle{plain}
\bibliography{reference}
\end{document}